\theoremstyle{plain}
\newtheorem{thm}{Theorem}[section]
\newtheorem{dfn}[thm]{Definition}
\newtheorem{lem}[thm]{Lemma}
\newtheorem{rmk}[thm]{Remark}
\begin{document}

\title[Non-inner automorphisms of order $p$ in finite thin $p$-groups ]%
{The existence of non-inner automorphisms of order $p$ in finite thin $p$-groups }

\author[M.Ruscitti]{Marco Ruscitti}
\address{DISIM \\Universit\`a degli studi dell'Aquila\\ 67100 L'Aquila, Italy\\
{\it E-mail address}: { \tt marco.ruscitti@dm.univaq.it }}

\author[L.Legarreta]{Leire Legarreta}
\address{Matematika Saila\\ Euskal Herriko Unibertsitatea UPV/EHU\\
48080 Bilbao, Spain\\ {\it E-mail address}: {\tt leire.legarreta@ehu.eus}}

\thanks{The first author would like to thank the Department of Mathematics at the University of the Basque Country for its excellent hospitality while part of this paper was being written. The second author is supported by the Spanish Government, grants
MTM2011-28229-C02-02 and MTM2014-53810-C2-2-P, and by the Basque Government, grant IT753-13 and IT974-16.}

\keywords{Finite p-groups, non-inner automorphisms, derivation, thin p-groups\vspace{3pt}}
\subjclass[2010]{20D15, 20D45}

\begin{abstract}
In this paper we study the existence of at least one non-inner automorphism of order $p$ of a finite thin $p$-group, whenever the prime $p$ is.
\end{abstract}

\maketitle

\author{Marco Ruscitti}
\address{DISIM \\Universit\`a degli studi dell'Aquila\\ 67100 L'Aquila, Italy\\
{\it E-mail address}: { \tt marco.ruscitti@dm.univaq.it }}

\vspace{8pt}

\author{Leire Legarreta}
\address{Matematika Saila\\ Euskal Herriko Unibertsitatea UPV/EHU\\
48080 Bilbao, Spain\\ {\it E-mail address}: {\tt leire.legarreta@ehu.eus}}

\section{Introduction}

The main goal of this paper is to contribute to the longstanding conjecture of Berkovich posed in 1973, that conjectures that every finite $p$-group admits a non-inner automorphism of order $p$, where $p$ denotes a prime number \cite[Problem 4.13]{khukhro:2010}. The conjecture has attracted the attention of many mathematicians during the last couple of decades, and has been confirmed for many classes of finite $p$-groups. It is remarkable to put on record that, in 1965, Liebeck \cite{liebeck:1965} proved the existence of a non-inner automorphism of order $p$ in all finite $p$-groups of class $2$, where $p$ is an odd prime. However, the fact that there always exists a non-inner automorphism of order $2$ in all finite $2$-groups of class $2$ was proved by Abdollahi \cite{abdollahi:2007} in 2007. The conjecture was confirmed for finite regular $p$-groups by Schmid \cite{schmid:1980} in 1980. Indeed, Deaconescu \cite{deaconescu:2002} proved it for all finite $p$-groups $G$ which 
 are not 
 strongly Frattinian. Moreover, Abdollahi \cite{abdollahi:2010} proved it for finite $p$-groups $G$ such that $G/Z(G)$ is a powerful $p$-group, and Jamali and Visesh \cite{jamali:2013} did the same for finite $p$-groups with cyclic commutator subgroup. In the realm of finite groups, quite recently, the result has been confirmed  for semi-abelian $p$-groups by Benmoussa and  Guerboussa \cite{benmoussa:2015}, and for $p$-groups of nilpotency class $3$, by Abdollahi, Ghoraishi and Wilkens  \cite{abdollahi':2013}. Finally, Abdollahi et al \cite{abdollahi:2014} proved the conjecture for $p$-groups of coclass $2$.

With the contribution of this paper we add another class of finite $p$-groups to the above list, by proving that the above mentioned conjecture holds true for all finite thin $p$-groups, whenever the prime $p$ is.

The organization of the paper is as follows. In Section 2 we exhibit some preliminary facts and tools that will be used in the proof of the main result of the paper, and in Section 3 we recall elementary matters about thin $p$-groups and we prove the main result as well. 
\vspace{10pt}

\noindent
Throughout the paper, most of the notation is standard and it can be found, for instance, in \cite{Ro}.

\section{Preliminaries}
In this section, we recall some facts about derivations in the multiplicative setting, and some related lemmas, which will be useful to prove the main Theorem \ref{main} of the paper. The reader could be referred to  \cite{gavioli:1999} for more details and explicit proofs about derivations.

\begin{dfn}
Let $G$ be a group and let $M$ be a right $G$-module. A derivation $\delta:G \rightarrow M$ is a function such that $$\delta(gh)={\delta(g)}^h\delta(h),   \text{ for all  } g, h \in G.$$
\end{dfn}

In terms of its properties, it is well-known that a derivation is uniquely determined by its values over a set of generators of $G$. Let $F$ be a free group generated by a finite subset $X$ and let $G=\langle X : r_{1}, \ldots, r_{n} \rangle$ be a group whose free presentation is $F/R$, where $R$ is the normal closure of the set of relations $\{ r_{1} , \ldots , r_{n}\}$ of $G$. Then a standard argument shows that $M$ is a $G$-module if and only if $M$ is an $F$-module on which $R$ acts trivially. Indeed, if we denote by $\pi$ the canonical homomorphism $\pi: F \rightarrow G$, then the action of $F$ on $M$ is given by $mf=m\pi(f)$, for all $m\in M$ and all $f\in F$. 
Continuing with the same notation, we have the following results.

\begin{lem} \label{determineunique}
Let $M$ be an $F$-module. Then every function $f: X \rightarrow M$ extends in a unique way to a derivation $\delta: F \rightarrow M$.
\end{lem}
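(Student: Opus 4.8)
The plan is to split the statement into an existence part and a uniqueness part, and to derive both from the universal mapping property of the free group $F$ by passing to a suitable split extension of $M$ by $F$.

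For uniqueness, I would first record the two elementary consequences of the cocycle identity $\delta(gh)=\delta(g)^h\delta(h)$: setting $g=h=1$ gives $\delta(1)=1$, and setting $h=g^{-1}$ gives $\delta(g^{-1})=(\delta(g)^{-1})^{g^{-1}}$. Since every element of $F$ is a product of elements of $X$ and their inverses, repeated use of the cocycle identity expresses $\delta(w)$ in terms of the values $\delta(x)=f(x)$, $x\in X$, and the module action. Hence any derivation extending $f$ is completely determined, which gives uniqueness.

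For existence, the clean route is to form the split extension $E=M\rtimes F$ on the set $M\times F$ with multiplication $(m_1,g_1)(m_2,g_2)=(m_1^{g_2}m_2,\,g_1g_2)$; a direct check shows this is associative with identity $(1,1)$, so $E$ is a group containing $\{1\}\times F\cong F$ as a complement to the normal subgroup $M\times\{1\}\cong M$. I would then define $\psi:X\to E$ by $\psi(x)=(f(x),x)$ and invoke the universal property of the free group to obtain the unique homomorphism $\phi:F\to E$ extending $\psi$. Composing $\phi$ with the projection $E\to F$ yields a homomorphism $F\to F$ that is the identity on $X$, hence the identity on $F$; consequently $\phi(g)=(\delta(g),g)$ for a well-defined function $\delta:F\to M$. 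Reading off the first coordinate in the identity $\phi(gh)=\phi(g)\phi(h)$ shows precisely that $\delta(gh)=\delta(g)^h\delta(h)$, so $\delta$ is a derivation, and by construction $\delta(x)=f(x)$ on $X$.

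The only genuine subtlety — and the step I expect to be the main obstacle if one argues directly on reduced words instead — is the well-definedness of $\delta$: defining $\delta(w)$ by the cocycle rule on a word representing $w$ requires checking independence of the chosen word, equivalently compatibility with the free-group relations $xx^{-1}=x^{-1}x=1$. The semidirect-product formulation sidesteps this entirely, since well-definedness is inherited from the universal property of $F$, and the remaining verification reduces to the routine associativity check for $E$.
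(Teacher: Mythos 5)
Your proof is correct. The paper itself gives no proof of this lemma --- it is stated as a known fact, with the reader referred to the paper of Gavioli and Pannone for details --- so there is no in-text argument to compare against; your semidirect-product argument is the standard one. Both halves check out: uniqueness follows, as you say, from $\delta(1)=1$, $\delta(g^{-1})=(\delta(g)^{-1})^{g^{-1}}$, and induction on word length; for existence, the multiplication $(m_1,g_1)(m_2,g_2)=(m_1^{g_2}m_2,\,g_1g_2)$ is indeed associative for a right $F$-module, the map $x\mapsto(f(x),x)$ extends to a homomorphism $\phi:F\to E$ by freeness, the composite of $\phi$ with the projection $E\to F$ is the identity because it agrees with the identity on $X$, and the first coordinate of $\phi$ is then a derivation extending $f$. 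You also correctly identify the one genuine subtlety (well-definedness on reduced words) and correctly observe that the universal property of $F$ absorbs it, which is precisely what makes this route cleaner than a direct word-by-word construction.
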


\begin{lem} \label{Der}
Let $M$ be a $G$-module and let $\delta: G \rightarrow M$ be a derivation. Then $\bar{\delta} : F \to M$ given by the composition $\overline{\delta}(f)=\delta(\pi(f))$ is a derivation  such that $\overline{\delta}(r_i)=0$ for all $i\in \{1,\ldots,n\}$. Conversely, if $\overline{\delta}:F \rightarrow M$ is a derivation such that $\overline{\delta}(r_i)=0$ for all $i \in \{1,\ldots, n\}$, then $\delta(fR)=\overline{\delta}(f)$ defines, uniquely, a derivation on $G=F/R$ to $M$ such that $\overline{\delta}=\delta \circ \pi$.
\end{lem}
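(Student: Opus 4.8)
The plan is to treat the two directions separately. Throughout I write $M$ multiplicatively, as in the Definition above, and denote by $1$ its identity element (the element written $0$ in the statement); I also write $m^{g}$ for the action of $g$ on $m\in M$. I would first record two identities valid for any derivation $d$ into a module: applying the rule to $1=1\cdot 1$ and cancelling gives $d(1)=1$, and applying it to $1=gg^{-1}$ gives $d(g^{-1})=(d(g)^{g^{-1}})^{-1}$. These are the only auxiliary facts needed.

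For the forward implication, given a derivation $\delta\colon G\to M$, for $f,h\in F$ I compute
\[ \overline{\delta}(fh)=\delta(\pi(fh))=\delta(\pi(f)\pi(h))=\delta(\pi(f))^{\pi(h)}\,\delta(\pi(h))=\overline{\delta}(f)^{h}\,\overline{\delta}(h), \]
using in turn that $\pi$ is a homomorphism, that $\delta$ is a derivation on $G$, and the definition $m^{h}:=m\pi(h)$ of the $F$-action on $M$. Thus $\overline{\delta}$ is a derivation of $F$, and since each $r_{i}\in R=\ker\pi$ we get $\overline{\delta}(r_{i})=\delta(\pi(r_{i}))=\delta(1)=1$.

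For the converse, write $d=\overline{\delta}$. The heart of the argument is to show that $d$ vanishes not only on the $r_{i}$ but on all of $R$. Since $R$ is the normal closure of $\{r_{1},\dots,r_{n}\}$, every element of $R$ is a product of conjugates $f^{-1}r_{i}^{\pm1}f$. For a single such conjugate I would expand, via $d(xyz)=d(x)^{yz}d(y)^{z}d(z)$,
\[ d(f^{-1}r_{i}f)=d(f^{-1})^{r_{i}f}\,d(r_{i})^{f}\,d(f); \]
the middle factor is $1$ because $d(r_{i})=1$, and substituting $d(f^{-1})=(d(f)^{f^{-1}})^{-1}$ turns the outer factors into $(d(f)^{f^{-1}r_{i}f})^{-1}d(f)$. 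Here the crucial point is that $f^{-1}r_{i}f\in R$ acts trivially on $M$ — this is exactly the hypothesis that $M$ is a $G$-module, i.e. that $R$ acts trivially — so $d(f)^{f^{-1}r_{i}f}=d(f)$ and the product collapses to $1$ (the case $r_{i}^{-1}$ is identical since $d(r_{i}^{-1})=1$). A short induction on the number of conjugate factors, discarding the twisting exponents again by triviality of the $R$-action, then yields $d(s)=1$ for every $s\in R$.

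Granting this, well-definedness of $\delta(fR):=d(f)$ is immediate: if $f_{1}R=f_{2}R$ then $f_{2}=f_{1}s$ with $s\in R$, whence $d(f_{2})=d(f_{1})^{s}d(s)=d(f_{1})$, using $d(s)=1$ and the trivial action of $s$. That $\delta$ is a derivation of $G$ with $\overline{\delta}=\delta\circ\pi$ then follows by unwinding the definitions exactly as in the forward direction, and uniqueness is forced since any such $\delta$ must satisfy $\delta(\pi(f))=\overline{\delta}(f)$. I expect the vanishing-on-$R$ step to be the only genuine obstacle: it is there that conjugation and the module action must be handled together, and it is precisely the triviality of the $R$-action that makes both the single-conjugate cancellation and the inductive step go through.
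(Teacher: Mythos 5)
Your proof is correct. Note that the paper itself gives no proof of this lemma at all---it states it as a known fact and defers to \cite{gavioli:1999} for ``explicit proofs about derivations''---so there is no argument of the paper's to compare against; yours is the standard one, and it is complete: the forward direction is the routine computation, and in the converse you correctly identify and handle the one genuine issue, namely propagating the vanishing of $\overline{\delta}$ from the finitely many relators $r_i$ to the whole normal closure $R$, where the expansion $d(f^{-1}r_if)=\bigl(d(f)^{f^{-1}r_if}\bigr)^{-1}d(f)$ collapses precisely because $R$ acts trivially on $M$ (the defining property of $M$ being a $G$-module pulled back along $\pi$), after which well-definedness of $\delta(fR):=\overline{\delta}(f)$, the derivation identity on $G$, and uniqueness all follow by surjectivity of $\pi$ exactly as you say.
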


In the following lemma, we study the relationship between derivations and automorphisms of a finite $p$-group. 

\begin{lem} \label{lift}
Let $G$ be a finite $p$-group and let $M$ be a normal abelian subgroup of $G$ viewed as  a $G$-module. Then for any derivation $\delta: G \rightarrow M$, we can define uniquely an endomorphism $\phi$ of $G$ such that $\phi(g)=g\delta (g)$ for all $g \in G$. Furthermore, if  $\delta(M)=1$,  then $\phi$ is  an automorphism of $G$.
\end{lem}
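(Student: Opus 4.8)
The plan is to verify directly that the prescribed formula yields a homomorphism, and then to exploit the finiteness of $G$ to promote injectivity to bijectivity. First I would make the $G$-module structure on $M$ explicit: since $M$ is a \emph{normal} abelian subgroup, the action making it into a $G$-module is conjugation, so that $m^h = h^{-1}mh$ for $m \in M$ and $h \in G$. This is the crucial observation, because it is exactly what causes the derivation identity to collapse into multiplicativity.

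With this in hand, the map $\phi(g) = g\delta(g)$ is a well-defined function $G \to G$, as $\delta(g) \in M \subseteq G$, and it is visibly the unique map obeying the stated formula. To see that it is an endomorphism I would compute, using the derivation property, $\phi(gh) = gh\,\delta(gh) = gh\,\delta(g)^h\delta(h)$, then substitute $\delta(g)^h = h^{-1}\delta(g)h$: the factor $hh^{-1}$ cancels, leaving $g\delta(g)\,h\delta(h) = \phi(g)\phi(h)$. Hence $\phi$ is an endomorphism of $G$, and this settles the first assertion.

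For the final claim, since $G$ is finite it is enough to show that $\phi$ is injective. Assuming $\delta(M) = 1$, I would take $g \in \ker\phi$, so that $g\delta(g) = 1$ and therefore $g = \delta(g)^{-1} \in M$. But then $\delta(g) = 1$, because $\delta$ vanishes on $M$, which forces $g = 1$. Thus $\ker\phi$ is trivial, $\phi$ is injective, and finiteness of $G$ yields that $\phi$ is an automorphism.

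I do not anticipate a genuine obstacle here. The only point demanding care is correctly identifying the module action as conjugation and keeping track of the left/right convention hidden in the symbol $\delta(g)^h$, since that is precisely what makes the cancellation in the multiplicativity computation go through; once that is fixed, the remaining steps are short direct calculations.
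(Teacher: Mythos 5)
Your proof is correct. The paper itself states this lemma without proof (deferring to the cited reference of Gavioli--Pannone for explicit proofs about derivations), and your argument --- identifying the module action as conjugation so that the derivation identity $\delta(gh)=\delta(g)^h\delta(h)$ collapses into multiplicativity of $\phi$, then using $\ker\phi\subseteq M$ together with $\delta(M)=1$ and finiteness to get bijectivity --- is exactly the standard direct verification the paper implicitly relies on.
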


In order to reduce some calculations in terms of commutators, we keep in mind the following result.

\begin{lem} \label{free}
Let $F$ be a free group, $p$ be a prime number and $A$ be an $F$-module. If $\delta : F \rightarrow A$ is a derivation then, 
\begin{enumerate}
\item  $\delta(F^{p})=\delta(F)^{p}[\delta(F),_{p-1}F]$,
\item if $[A,_{i}F]=1$, we have $\delta(\gamma_{i}(F)) \leq [\delta(F), _{i-1}F]$ for all $ i \in \mathbb{N}$.
\end{enumerate}
\end{lem}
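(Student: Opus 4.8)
The plan is to prove both parts by induction on derivation identities, exploiting the fact that a derivation is a multiplicative analogue of a homomorphism. Let me sketch the two parts.

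For part (i), I would start from the basic derivation identity applied to a power. For a single element $g \in F$, the formula $\delta(g^p)$ telescopes: writing $g^p = g \cdot g^{p-1}$ and iterating $\delta(gh) = \delta(g)^h \delta(h)$, one obtains $\delta(g^p) = \delta(g)^{g^{p-1}} \delta(g)^{g^{p-2}} \cdots \delta(g)^g \delta(g) = \prod_{j=0}^{p-1} \delta(g)^{g^j}$. The key observation is then to rewrite each conjugate $\delta(g)^{g^j}$ in terms of commutators: since $\delta(g)^{g^j} = \delta(g)[\delta(g), g^j]$, collecting the $p$ copies of $\delta(g)$ yields $\delta(g)^p$ times a product of commutator terms. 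I would then argue that the commutator part lands in $[\delta(F), {}_{p-1}F]$. The natural way to see this is to recognize the product $\prod_{j=0}^{p-1}\delta(g)^{g^j}$ as the image of $\delta(g)$ under the norm element $1 + g + \cdots + g^{p-1}$ in the group ring, and to use the standard identity $1 + g + \cdots + g^{p-1} \equiv p + \binom{p}{2}(g-1) + \cdots$ expanded in powers of $(g-1)$, where $(g-1)^k$ corresponds to the weight-$k$ commutator $[\,\cdot\,, {}_k F]$. Since $F$ is free, this can be done cleanly on generators and then extended by the derivation's behavior on products; the claimed equality $\delta(F^p) = \delta(F)^p[\delta(F),{}_{p-1}F]$ should follow.

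For part (ii), I would proceed by induction on $i$. The base case $i = 1$ reads $\delta(\gamma_1(F)) = \delta(F) \leq [\delta(F), {}_0 F] = \delta(F)$, which is immediate. For the inductive step, I would use the identity for the derivation of a commutator: a direct computation from the definition gives a formula for $\delta([a,b])$ expressed in terms of $\delta(a)$, $\delta(b)$, and their conjugates/commutators with $a,b$. Since $\gamma_{i+1}(F) = [\gamma_i(F), F]$, I would apply this commutator identity with $a \in \gamma_i(F)$ and $b \in F$, use the inductive hypothesis $\delta(\gamma_i(F)) \leq [\delta(F), {}_{i-1}F]$, and then push one more commutator with $F$ to reach $[\delta(F), {}_i F]$. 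The hypothesis $[A, {}_i F] = 1$ is what allows the lower-weight error terms in the commutator expansion to collapse: terms that would otherwise obstruct the clean containment become trivial because $A$ is annihilated by $i$-fold commutation.

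The main obstacle I anticipate is the bookkeeping in both parts. In part (i) the delicate point is justifying that, after expanding the norm element, the genuinely $p$-th-power contribution separates cleanly from the commutator contribution and that the latter sits in $[\delta(F),{}_{p-1}F]$ rather than merely in some larger term of the lower central series — this requires care about which binomial coefficients survive and how the free structure lets one reduce to generators. In part (ii) the obstacle is writing down and controlling the exact commutator-derivation identity: the expansion of $\delta([a,b])$ produces several conjugated factors, and one must verify that the hypothesis $[A,{}_iF]=1$ kills precisely the terms of weight too low to fit into $[\delta(F),{}_iF]$, leaving the desired containment. I would handle this by keeping all computations modulo $[A,{}_{i}F]$ wherever possible, so that only the leading commutator weight needs to be tracked.
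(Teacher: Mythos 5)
Your proposal follows essentially the same route as the paper: for (i), the telescoped norm element $1+g+\cdots+g^{p-1}$ acting on $\delta(g)$ together with the mod-$p$ reduction of that element to $(g-1)^{p-1}$ (your binomial expansion is exactly the paper's congruence $(x-1)^{p-1}\equiv 1+x+\cdots+x^{p-1} \bmod p$), and for (ii), induction on $i$ via the commutator-derivation expansion of $\delta([a,b])$, with the vanishing hypothesis collapsing the extra terms. The one caveat is the index bookkeeping in (ii) — in the step for $\gamma_{i+1}(F)$ the lemma grants only $[A,{}_{i+1}F]=1$, while invoking the inductive hypothesis nominally requires $[A,{}_{i}F]=1$ (fixable by proving the unconditional statement $\delta(\gamma_i(F))\leq [\delta(F),{}_{i-1}F]\,[A,{}_{i}F]$) — but the paper's own proof glosses over exactly the same point, so your sketch matches it.
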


\begin{proof}
Let $ x \in F$. We have $\delta(x^{p})=\delta(x)^{x^{p-1} + x^{p-2} + \cdots + 1 }$. Since $(x-1)^{p-1} \equiv  x^{p-1} + x^{p-2} + \cdots + 1\mod p$, the first assertion follows. Now let us prove the second assertion by induction on $i$.
 Clearly, the assertion holds when $i=1$. 
Let us suppose, by inductive hypothesis that if $[A,_{k}F]=1$ 
then $\delta(\gamma_{k}(F)) \leq [\delta(F), _{k-1}F]$ for some $ k \in \mathbb{N}$. 
Let us take any $a \in F$ and any $b \in \gamma_{k}(F)$, and let us suppose 
that $[A,_{k+1}F]=1$. Then $$\delta([a,b])=[\delta(a),b][a,\delta(b)][a,b,\delta(a)][a,b,\delta(b)] \in 
  [\delta(F), _{k}F].$$
\end{proof}

\section{Berkovich Conjecture for finite thin $p$-groups}
To develop the section, we start enumerating some structural properties concerned about thin $p$-groups. Firstly, let us recall that in a group $G$ an antichain is a set of mutually incomparable elements in the lattice of its normal subgroups.  It is well-known that, if $G$ is a $p$-group of maximal class, then the lattice of its normal subgroups consists of $p+1$ maximal subgroups and of the terms of the lower central series of $G$. Thus, a $p$-group of maximal class has only one antichain, which consists of its maximal subgroups. The necessity to extend the family of groups of maximal class to a bigger family of $p$-groups with a bound on the antichains, leads us to introduce the formal definition of thin $p$-group. Let us introduce the definition of thin $p$-groups as in \cite{caranti:1996}. Forwards, we mention as well some already proved results about the existence of non-inner automorphisms of order $p$ in certain specific cases to avoid repetitions.

\begin{dfn}
Let $G$ be a finite $p$-group. Then $G$ is thin if every antichain in $G$ contains at most $p+1$ elements.
\end{dfn}
The following results about finite thin $p$-groups are discussed in \cite{brandl:1992}.
\begin{lem} \label{basics}
Let $G$ be a finite thin $p$-group, and let $p$ be an odd prime. 
\begin{enumerate}
\item If $N$ is a normal subgroup of $G$,  then $N$ is a term of the lower central series of $G$ if and only if $N$ is the unique normal subgroup of its order.
\item If $G$ is not of maximal class, then $|G| \geq p^{5}$ and $G/ \gamma_{3}(G)$ is of exponent $p$.
\end{enumerate}
\end{lem}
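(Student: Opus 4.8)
The plan is to reduce both assertions to a description of the lattice of normal subgroups of a thin $p$-group as a ``chain of diamonds''. The single structural fact I would establish first is that every normal subgroup $N \trianglelefteq G$ is comparable with each term $\gamma_i(G)$ of the lower central series, so that $N$ is sandwiched between two consecutive terms, $\gamma_{i+1}(G) \le N \le \gamma_i(G)$ for a unique $i$. Comparability is proved by descending through the series: since $\gamma_i(G)/\gamma_{i+1}(G)$ is central in $G/\gamma_{i+1}(G)$ (because $[\gamma_i(G),G]=\gamma_{i+1}(G)$), the group $G$ acts trivially on each layer, and every subgroup of $\gamma_i(G)/\gamma_{i+1}(G)$ is therefore $G$-invariant and gives a normal subgroup of $G$. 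The antichain bound then forces $|\gamma_i(G):\gamma_{i+1}(G)| \le p^2$, with the layer being either a chain (index $p$, one intermediate subgroup) or, when the index is $p^2$ and the layer is elementary abelian, a genuine diamond carrying exactly the $p+1$ intermediate normal subgroups permitted by thinness.

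From this picture, assertion (i) is immediate in both directions. If $N = \gamma_i(G)$ is a term and $N'$ is any normal subgroup with $|N'| = |N|$, then $N'$ is comparable with $\gamma_i(G)$ by the structural fact, and equality of orders forces $N' = \gamma_i(G)$; hence a lower central term is the unique normal subgroup of its order. Conversely, if $N$ is not a term of the lower central series, it lies strictly inside some diamond, $\gamma_{i+1}(G) < N < \gamma_i(G)$ with $|\gamma_i(G):\gamma_{i+1}(G)| = p^2$; since that layer is central and elementary abelian, all $p+1$ of its lines are $G$-invariant, producing $p+1$ distinct normal subgroups of the same order as $N$. Thus $N$ fails to be unique of its order, which proves the contrapositive.

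For assertion (ii) I would first record that, the maximal subgroups of $G$ forming an antichain, thinness gives at most $p+1$ of them and hence $|G/\Phi(G)| \le p^2$, so $G$ is two-generated. The bound $|G| \ge p^5$ then follows by ruling out thin groups of order at most $p^4$ that are not of maximal class: inspection via the diamond structure shows that for $|G| \in \{p^3, p^4\}$ the lattice degenerates to the single antichain of maximal subgroups sitting atop a chain, which is exactly the normal-subgroup lattice of a group of maximal class. For the exponent statement, with $p$ odd I would combine two-generation with a collection argument. Writing $G/\gamma_3(G) = \langle a, b \rangle$ of class at most $2$, the identity $(xy)^p \equiv x^p y^p \pmod{\gamma_3(G)}$, valid because $\binom{p}{2} \equiv 0 \pmod p$ for $p$ odd, reduces the claim to $a^p, b^p \in \gamma_3(G)$ together with $\gamma_2(G)^p \le \gamma_3(G)$, which the thin hypothesis supplies through the control of the layers $G/\gamma_2(G)$ and $\gamma_2(G)/\gamma_3(G)$; Lemma \ref{free} can be invoked here to track the $p$-th power map on these quotients.

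The main obstacle is the structural input underpinning everything: proving that a layer of index $p^2$ is elementary abelian, so that it is a true diamond rather than a cyclic ``thick chain'' in which a non-lower-central normal subgroup could spuriously be unique of its order, and pinning down the two small-order cases needed for (ii). These are precisely the delicate points established for thin $p$-groups in \cite{brandl:1992}, on which I would rely.
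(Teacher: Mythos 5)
First, note that the paper itself does not prove this lemma at all: it is quoted from \cite{brandl:1992} (``The following results about finite thin $p$-groups are discussed in \cite{brandl:1992}''), so your sketch can only be judged as a standalone argument, and as such it has two genuine gaps.

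The first gap is in the cornerstone of your whole plan, the claim that every normal subgroup $N$ of $G$ is comparable with each $\gamma_i(G)$, i.e.\ that $\gamma_{i+1}(G)\le N\le\gamma_i(G)$ for some $i$. The argument you give --- that each layer $\gamma_i(G)/\gamma_{i+1}(G)$ is central in $G/\gamma_{i+1}(G)$, so all of its subgroups are normal in $G$ --- proves the wrong implication: it shows that subgroups \emph{sitting inside} a layer are normal, but says nothing about an arbitrary normal subgroup $N$, which could a priori sit askew (with $N\gamma_{i+1}(G)/\gamma_{i+1}(G)$ proper and nontrivial while $\gamma_{i+1}(G)\not\le N$). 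The sandwich property is exactly where thinness must be used, via the covering property of Lemma \ref{cp}: taking $i$ maximal with $N\le\gamma_i(G)$ and $h\in N\setminus\gamma_{i+1}(G)$, normality gives $[h,G]\le N$, so $\gamma_{i+1}(G)=[h,G]\gamma_{i+2}(G)\le N\gamma_{i+2}(G)$, and then a downward induction ($\gamma_{i+k+1}(G)=[\gamma_{i+k}(G),G]\le[N\gamma_{i+k+1}(G),G]\le N\gamma_{i+k+2}(G)$, with the series eventually trivial) yields $\gamma_{i+1}(G)\le N$. Without this step your ``chain of diamonds'' picture, and with it both directions of (i), is unsupported.

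The second gap is in the exponent statement of (ii): your reduction ends with ``$a^p,b^p\in\gamma_3(G)$ and $\gamma_2(G)^p\le\gamma_3(G)$, which the thin hypothesis supplies,'' but thinness alone cannot supply this, because the hypothesis that $G$ is \emph{not} of maximal class is never used in your argument. Every $p$-group of maximal class is thin, and the nonabelian metacyclic group of order $p^3$ and exponent $p^2$ (maximal class, $\gamma_3(G)=1$) has $G/\gamma_3(G)$ of exponent $p^2$; so any correct proof must invoke non-maximality of the class in an essential way, which yours does not. Beyond these two points, you explicitly defer the remaining delicate facts (that layers of order $p^2$ are elementary abelian rather than cyclic --- without which the converse direction of (i) is actually false as argued --- and the elimination of small non-maximal-class orders) to \cite{brandl:1992}; that is defensible, and indeed it is all the paper itself does, but it means the proposal is a partial sketch organized around an unproved structural claim rather than a proof.
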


\begin{lem} \label{cp}
Let $G$ be a finite thin $p$-group and let us take any $h \in \gamma_{i}(G) - \gamma_{i+1}(G)$. Then the following called coverty property holds, $$[h,G]\gamma_{i+2}(G)=\gamma_{i+1}(G).$$
\end{lem}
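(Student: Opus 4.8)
The plan is to prove the two inclusions separately, with all the difficulty concentrated in the reverse one. Set $N=[h,G]\gamma_{i+2}(G)$. The inclusion $N\le\gamma_{i+1}(G)$ is immediate, since $h\in\gamma_i(G)$ gives $[h,G]\le[\gamma_i(G),G]=\gamma_{i+1}(G)$ and $\gamma_{i+2}(G)\le\gamma_{i+1}(G)$. I would first record that $N$ is normal in $G$: modulo $\gamma_{i+2}(G)$ it lands inside $\gamma_{i+1}(G)/\gamma_{i+2}(G)$, which is central in $G/\gamma_{i+2}(G)$ because $[\gamma_{i+1}(G),G]=\gamma_{i+2}(G)$, and a central subgroup is normal. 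For the reverse inclusion I would argue by contradiction: assume $N\lneq\gamma_{i+1}(G)$ and pass to $\overline{G}=G/N$, which is again thin (an antichain in $\overline{G}$ lifts to an antichain of normal subgroups of $G$ containing $N$). Since $[h,G]\le N$, the image $\overline{h}$ is a \emph{nontrivial central} element of $\overline{G}$ lying in $\gamma_i(\overline{G})\setminus\gamma_{i+1}(\overline{G})$, and as $\gamma_{i+2}(G)\le N$ the group $\overline{G}$ has class exactly $i+1$. Thus everything reduces to the following clean statement: \emph{a thin $p$-group of class $c$ satisfies $Z(G)\cap\gamma_{c-1}(G)=\gamma_c(G)$}, i.e. it has no nontrivial central element in $\gamma_{c-1}(G)\setminus\gamma_c(G)$.

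To prove this reduced claim I would proceed by induction on $i$ (equivalently, on the class), the base case $i=1$ following from $2$-generation of thin groups, since $\gamma_2(G)/\gamma_3(G)$ is then cyclic and already covered by either $[h,a]$ or $[h,b]$. For the reduced claim itself, write $D=Z(G)\cap\gamma_{c-1}(G)$ and suppose $D\gneq\gamma_c(G)$. Because $[\gamma_{c-1}(G),G]=\gamma_c(G)\ne1$ we have $\gamma_{c-1}(G)\not\le Z(G)$, so $D\lneq\gamma_{c-1}(G)$; hence $\gamma_c(G)\lneq D\lneq\gamma_{c-1}(G)$, which forces $|\gamma_{c-1}(G)/\gamma_c(G)|=p^2$ and $|D/\gamma_c(G)|=p$. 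The case where the top factor has order $p$ is therefore disposed of at once, as it would make $\gamma_{c-1}(G)$ central, contradicting $\gamma_c(G)\ne1$.

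What remains is the ``diamond'' case $|\gamma_{c-1}(G)/\gamma_c(G)|=p^2$, and this is where I expect the real work to lie. The natural device is the bilinear commutator pairing
\[
\beta\colon \gamma_{c-1}(G)/\gamma_c(G)\times G/\gamma_2(G)\longrightarrow \gamma_c(G),\qquad \beta(\overline{x},\overline{g})=[x,g],
\]
which is well defined since $\gamma_c(G)$ is central and $[\gamma_{c-1}(G),\gamma_2(G)]=1$; its left radical is exactly $D/\gamma_c(G)$, so the claim becomes that $\beta$ is left-nondegenerate. To force this I would feed in the covering property at the previous level, available by the inductive hypothesis, to express elements of $\gamma_{c-1}(G)$ modulo $\gamma_c(G)$ as products of commutators $[k,g]$ with $k\in\gamma_{c-2}(G)$, and then apply the Hall--Witt identity to rewrite $[\,[k,g],x\,]$ in terms of $[\,[k,x],g\,]$, thereby showing $[D,G]\ne1$ and contradicting $D\le Z(G)$. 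The main obstacle is controlling this computation so that the single-element map $[h,\cdot]$ provably sweeps out the \emph{whole} two-dimensional factor rather than a one-dimensional piece of it; this is precisely the point at which the thinness hypothesis must be used decisively, through the antichain bound and the resulting restrictions on the positions and multiplicities of the order-$p^2$ factors established in \cite{brandl:1992}, together with Lemma \ref{basics}.
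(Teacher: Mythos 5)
You should first note that the paper itself offers no proof of this lemma: it is quoted from \cite{brandl:1992}, where it is part of the proof that the antichain definition of thinness is equivalent to the covering property. So your attempt can only be judged on its own merits, and on those merits it is incomplete. Your reduction is correct and nicely organized: $N=[h,G]\gamma_{i+2}(G)$ is normal, quotients of thin groups are thin, and the lemma does reduce to the statement that a thin $p$-group of class $c$ satisfies $Z(G)\cap\gamma_{c-1}(G)=\gamma_{c}(G)$; the case where $\gamma_{c-1}(G)/\gamma_{c}(G)$ has order $p$ is handled correctly. But the ``diamond case'' is exactly where the entire content of the lemma lives, and there you do not give a proof: you set up the commutator pairing, correctly identify its left radical as $D/\gamma_c(G)$, and then concede that ``the main obstacle'' --- showing that $[h,\cdot]$ sweeps out the whole rank-two factor --- remains open, offering only the direction of Hall--Witt identities plus unspecified restrictions from the literature. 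An unexecuted plan at the decisive step is a gap, not a proof; indeed, carrying out the Hall--Witt computation with a central $d\equiv[k,g] \bmod \gamma_c(G)$ only yields relations such as $[k,b,a]=[a,b,k]$, which do not by themselves produce a contradiction.

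There is also a circularity problem in the inputs you propose to use. The facts you invoke from \cite{brandl:1992} and Lemma \ref{basics} --- that the lower central factors are elementary abelian of order at most $p^{2}$ (needed already for your step ``forces $|\gamma_{c-1}(G)/\gamma_{c}(G)|=p^{2}$''), and the ``positions and multiplicities of the order-$p^{2}$ factors'' --- are in that literature derived \emph{from} the covering property, so they cannot be fed into a proof of it. The tension is visible in your own reduction: if one were allowed to quote the structural consequences recorded in Remark \ref{structurethin} (coincidence of the upper and lower central series), then $Z(G)=\gamma_{c}(G)$ and your reduced claim is a triviality, with no pairing or Hall--Witt needed at all. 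What a genuine proof must do is use the antichain hypothesis directly in the final configuration: from a central element $h\in\gamma_{i}(G)\setminus\gamma_{i+1}(G)$ in a thin group with $\gamma_{i+2}(G)=1$ and $\gamma_{i+1}(G)\neq 1$, one must manufacture strictly more than $p+1$ pairwise incomparable normal subgroups (for instance from order-$p$ central subgroups attached to $h$ and to $\gamma_{i+1}(G)$, together with suitable normal closures). Your plan uses thinness only through the quotient-closure step and through imported facts, never through antichains at the crucial moment --- so the one place where the hypothesis must act decisively is precisely the place left open.
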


\begin{rmk} \label{structurethin}
The properties shown in Lemma \ref{basics} give us a lot of information about thin $p$-groups. Since an elementary abelian $p$-group is thin if and only if  its order  is $p^{2}$ (see \cite{caranti:1996}), then every finite thin $p$-group is a two generator group. Secondly, by \cite{brandl:1992} we know that the lower and the upper central series of this kind of groups coincide, and consequently that all quotients of these series are elementary abelian $p$-groups of order at most $p^{2}$. Consequently, $Z(G)$ must be cyclic of order $p$. In particular, the quotients of the lower and upper central of finite thin $p$-groups have exponent $p$.  Moreover, if $G$ is a finite thin $p$-group, then $\Phi(G)=\gamma_{2}(G)$, $Z_2(G)\leq Z(\Phi(G))$ and the property stated in the above Lemma \ref{cp} holds equivalently for terms of the upper central series of $G$. 
\end{rmk}

Since Liebeck in \cite{liebeck:1965} proved the existence of at least a non-inner automorphism of order $p$ in all finite $p$-groups of class $2$, where $p$ is an odd prime,  Abdollahi in \cite{abdollahi:2007} proved  the existence of such an non-inner automorphism of order $2$ in all finite $2$-groups of class $2$,  and Abdollahi, Ghoraishi and Wilkens in  \cite{abdollahi':2013} did the same in the case of finite $p$-groups of nilpotency class $3$, in our study, we can deal with finite $p$-groups of nilpotency class $c\geq 4$. On the other hand,  since Deaconescu in \cite{deaconescu:2002} proved the existence of at least a non-inner automorphism of order $p$ for all finite $p$-groups $G$ which are not strongly Frattinian, we may assume that the finite thin $p$-groups $G$ we are interested in, are strongly Frattinian, in other words, that the groups of our interest satisfy  $C_{G}(\Phi(G))=Z(\Phi(G))$. As a result due to Abdollahi in \cite{abdollahi:2010}, we have that if $G$
  is a fi
 nite $p$-group such that $G$ has no non-inner automorphisms of order $p$ leaving $\Phi(G)$ elementwise fixed, then $d(Z_{2}(G)/Z(G))=d(G)d(Z(G))$. Thus, in view of this previous result, in order to prove the existence of a non-inner automorphism of order $p$ in a finite thin $p$-group $G$, we may assume without any loss of generality that the condition $d(Z_{2}(G)/Z(G)) = d(G)d(Z(G))$ holds. Thus, using the consequences got in Remark \ref{structurethin} we have $Z_{2}(G)/Z(G)$ is isomorphic to an elementary abelian group of order $p^2$.

It is also showed in \cite{brandl:1988} (see Theorem $B$) that every finite thin $2$-group is a group of maximal class. Thus in the following we focus our attention on finite thin $p$-groups, where $p$ is an odd prime.

Now we are ready to prove the next theorem.

\begin{thm} \label{main}
Let $G$ be a finite thin $p$-group, where  $p$ is an odd prime. Then $G$ has a non-inner automorphism of order $p$.
\end{thm}
\begin{proof}
Let  us denote $c$ the nilpotency class of $G$. As it has been said before, we can assume $c\geq 4$. From Remark  \ref{structurethin} and its consequences and assumptions we know that $G$ is a two generator $p$-group, the lower and the upper central series of $G$ coincide,  $\Phi(G)=\gamma_{2}(G)$, $Z_2(G)\leq Z(\Phi(G))$, $Z(G) \cong C_{p}$, $d(Z_{2}(G)/Z(G)))=d(G)d(Z(G))$ and  $Z_{2}(G)/Z(G) \cong C_{p} \times C_{p}$. In particular, $[Z_{2}(G), \gamma_{2}(G)]=1$ and $\Omega_{1}(Z_{2}(G))$ is an elementary abelian subgroup of $G$. 

Indeed, $G/\gamma_{3}(G)$ has order $p^{3}$, class $2$ and  by Lemma \ref{basics} we may assume that its exponent is $p$, i.e. $G/ \gamma_{3}(G)$ is an extraspecial $p$-group of exponent $p$ and order $p^{3}$.   

Our goal is to obtain at least an automorphism of $G$ of order $p$. Firstly, we  define an assignment  on generators of the free group generated by two elements sending them to  $\Omega_{1}(Z_{2}(G))$. By Lemma \ref{determineunique} it is possible to extend these assignments to a derivation. Secondly, we show that this map preserves the relations defining the quotient $G/ \gamma_{3}(G)$, and then we apply Lemma \ref{Der} to induce a derivation from  $G/ \gamma_{3}(G)$ to $\Omega_{1}(Z_{2}(G))$. Finally, we lift this found map to a derivation from $G$ to $\Omega_{1}(Z_{2}(G))$, applying Lemma \ref{lift}. In the following paragraphs we describe in detail each of the mentioned steps.
\noindent

Let $x \rightarrow u, y \rightarrow v $ be an assignment on generators $x,y$ of the two generator free group $F_{2}$, with $u,v \in \Omega_{1}(Z_{2}(G))$. By Lemma \ref{Der} this assignment extends uniquely to a derivation $ \delta : F_{2} \rightarrow \Omega_{1}(Z_{2}(G))$ such that $\delta(x)=u$ and $\delta(y)=v$. Making perhaps some abuse of notation, we can assume that $G/ \gamma_{3}(G)$ corresponds to the presentation  $ \langle x,y  \ | \ x^{p}, y^{p}, [y,x,x], [y,x,y] \rangle$. Now let us see that the equalities $\delta(x^{p})=1$, $\delta(y^{p})=1$, $\delta([y,x,x])=1$ and $\delta([y,x,y])=1$ hold. To do it, firstly, let us show that $\delta$ is trivial on the $p$-th powers of elements of $F_2$. In fact, considering $\pi'$ the canonical epimorphism from $F_{2}$ to $G/\gamma_{3}(G)$ we have
$$\delta(f^{p})= \delta(f)^{f^{p-1}}\delta(f^{p-1})= \cdots = \delta(f)^{f^{p-1}+ \cdots + 1}=\delta(f)^{\pi'(f^{p-1}+ \cdots + 1)}=$$
$$= \delta(f)^{\pi'(f)^{p-1}+ \cdots + 1}=\delta(f)^{p}[\delta(f),\pi'(f)]^{\binom{p}{2}}=1, \quad{ for \ all \ } f \in F_2.$$
Secondly, let us analyze the behaviour of $\delta$ on commutators. Since for any $g,h \in G$ it holds that $gh=hg[g,h]$, then applying $\delta$ to this previous equality, we get  $\delta(gh)=\delta(hg[g,h])=\delta(hg)^{[g,h]}\delta([g,h])$, and consequently taking also into account that $[Z_{2}(G), \gamma_{2}(G)]=1$, we get
$\delta([g,h])=\delta(gh) (\delta(hg)^{[g,h]})^{-1}=\delta(gh) (\delta(hg))^{-1}=[\delta(g),h][g,\delta(h)]$, which is an element of $Z(G)$.

Moreover,
$$  \delta([y,x,y])=[\delta([y,x]),y] [[y,x],\delta(y)]=1,$$    $$\delta([y,x,x])=[\delta([y,x]),x] [[y,x],\delta(x)]=1.$$
(Take into account that the previous two equalities can be obtained as well, applying properly item (ii) of Lemma \ref{free}.)

\noindent

By Lemma \ref{Der} we can induce a derivation $\overline{\delta}$ from  $G/ \gamma_{3}(G)$ to $\Omega_{1}(Z_{2}(G))$. The map $\delta' :G \rightarrow \Omega_{1}(Z_{2}(G))$ defined by the law $\delta ' (g)=\overline{\delta}(g\gamma_{3}(G))$ for all $ g \in G$, is a derivation from $G$ to $ \Omega_{1}(Z_{2}(G))$.  By Lemma \ref{lift}, $\delta'$ induces an automorphism $\phi$ of $G$ by the law $\phi(g)=g\delta'(g)$ for all $g \in G$, leaving, in particular, $\Phi(G)$ elementwise fixed. Clearly, $Z_{2}(G) \leq Z(\gamma_{3}(G))$. In fact, the previous inclusion holds since $Z_{2}(G) \leq Z(\Phi(G))$, $\Phi(G)=\gamma_{2}(G)$ and $Z_{2}(G) = \gamma_{c-1}(G) \leq \gamma_{3}(G)$. This allows us to prove that the automorphism $\phi$ has order $p$. In fact,  
$\phi^{p}(g)=\phi^{p-1}(g\delta'(g))=\phi^{p-1}(g) \phi^{p-1}(\delta'(g))=\phi^{p-1}(g) \delta'(g)=\phi^{p-2}(g\delta'(g))\delta'(g)=\cdots= g (\delta'(g))^{p}=g$, for all $g \in G$.

\noindent

Thus, doing this previous construction we produce a set of automorphisms of $G$ of order $p$, whose size is equal to $|\Omega_{1}(Z_{2}(G))|^{2}$, in other words, whose size is the number of possible choices for the images of the above generators. Now, we distinguish the only two possible cases: $Z_{2}(G) \cong C_{p} \times C_{p} \times C_{p}$ or  $Z_{2}(G) \cong C_{p^{2}} \times C_{p}$. In the former case, when  $Z_{2}(G) \cong C_{p} \times C_{p} \times C_{p}$, we produce $p^{6}$ automorphisms of $G$ of order $p$. However, the number of inner automorphisms of $G$ induced by elements of $Z_{3}(G)$ is at most $p^{4}$.  Thus, a counting argument is enough to say that $G$ has a non-inner automorphism of $G$ of order $p$, and we get the statement of the Theorem in this case. Otherwise, in the latter case, when $Z_{2}(G) \cong C_{p^{2}} \times C_{p}$, choose an assignment $x \rightarrow u, y \rightarrow v $ such that $u,v \in \Omega_{1}(Z_{2}(G)) - \{1\}$, $u$ is not central, and
  let $\phi$ be the automorphism of $G$ of order $p$ obtained by this assignment. On the other hand, we know that $\phi$ is inner if and only if there exists an element $h_{\phi}$ of $Z_{3}(G) - Z_{2}(G)$ such that $h_{\phi}^{p} \in Z(G)$, $\phi(g)=g^{h_{\phi}}$ and $\delta'(g)=[g,h_{\phi}]$, for all $g \in G$. Under these circumstances, since $Z(G) \leq \Omega_{1}(Z_{2}(G))$ we would deduce that  $[h_{\phi}, G]Z(G) \leq \Omega_{1}(Z_{2}(G))  < Z_{2}(G)$,  which is in contradiction with the analogous coverty property of Lemma \ref{cp}. Consequently, this second case does not happen and the statement of the Theorem is proved.
\end{proof}

\section{Acknowledgements}
The first author would like to thank the Department of Mathematics at the University of the Basque Country for its excellent hospitality while part of this paper was being written; he also wish to thank Professors Gustavo A. Fern\'andez Alcober, Norberto Gavioli and Carlo Maria Scoppola for their suggestions.

\bibliography{articles}

\bigskip
\bigskip

\end{document}